\documentclass[12pt,twoside,reqno]{amsart}
\usepackage{pifont}
\usepackage{amsmath}
\usepackage{amsthm}
\usepackage{amsfonts}
\usepackage{amssymb}
\usepackage{latexsym}
\usepackage{amstext}
\usepackage{array}
\usepackage{graphicx}

\date{}
\allowdisplaybreaks[4] \footskip=15pt
\renewcommand{\uppercasenonmath}[1]{}

\newtheorem{thm}[subsection]{Theorem}
\newtheorem{cor}[subsection]{Corollary }
\newtheorem{Def}[subsection]{Definition}
\newtheorem{lem}[subsection]{Lemma}

\newtheorem{prop}[subsection]{Proposition}
\newtheorem{exm}[subsection]{Example}
  
\newcommand{\bthm}{\begin{thm} }
\newcommand{\ethm}{\end{thm} }
\newcommand{\bpro}{\begin{prop}}
\newcommand{\epro}{\end{prop}}
\newcommand{\bdf}{\begin{Def}}
\newcommand{\edf}{\end{Def}}
\newcommand{\bexm}{\begin{exm}}
\newcommand{\eexm}{\end{exm}}
\newcommand{\blem}{\begin{lem}}
\newcommand{\elem}{\end{lem}}
\newcommand{\bpf}{\begin{proof}}
\newcommand{\epf}{\end{proof}}
\newcommand{\bcor}{\begin{cor}}
\newcommand{\ecor}{\end{cor}}
\newcommand{\ba}{\begin{array}}
\newcommand{\ea}{\end{array}}
\newcommand{\bea}{\begin{eqnarray}}
\newcommand{\eea}{\end{eqnarray}}

\newcommand{\brem}{\begin{remark}}
\newcommand{\erem}{\end{remark}}

\newtheorem{theorem}{Theorem}[section]

\newtheorem{corollary}[theorem]{Corollary}
\newtheorem{lemma}[theorem]{Lemma}
\theoremstyle{definition}
\newtheorem{definition}[theorem]{Definition}
\newtheorem{example}[theorem]{Example}
\theoremstyle{remark}
\newtheorem{remark}[theorem]{Remark}

\newcommand{\Z}{\mathbb Z}
\newcommand{\F}{\mathbb F}
\newcommand{\Cay}{\operatorname{Cay}}
\newcommand{\diam}{\operatorname{diam}}
\newcommand{\popt}{\pi_{\mathrm{opt}}}
\newcommand{\supp}{\operatorname{supp}}
\newcommand{\ind}{\mathbf 1}
\newcommand{\rhothree}{\rho_3}
\newcommand{\wt}{\operatorname{wt}}

\newsavebox{\tablebox}

\begin{document}

\linespread{1}

\begin{center}
{\large  \bf Pebbling Numbers and Generator Reinforcement of Cayley Graphs}\\

 \vskip 0.8cm
{\small
 Yuan Yang \footnote{School of Mathematics, Nanjing University, Nanjing 210093, P.R.China\\
  \indent E-mail addresses: zuohuiwoziji1@126.com(Y. Yang)} \ \
 Bocheng Li \footnote{Corresponding author. Army Engineering University of PLA, Nanjing 210042, P.R.China\\
 \indent E-mail addresses: 1358736618@qq.com(B. Li)}\ \
 Qingzhong Ji \footnote{School of Mathematics, Nanjing University, Nanjing 210093, P.R.China \\
  \indent E-mail addresses: qingzhji@nju.edu.cn(Q. Ji)}}
\vskip 3mm
\end{center}

{\bf Abstract:} {\small We study inverse-closed generator additions of minimum size that make the optimal pebbling number of a finite Cayley graph at most three while preserving its specified regular action. A weighted vertex-cover formulation on inverse orbits yields an $O(|\Gamma|^2)$ algorithm for abelian groups in the explicit multiplication-table model. We derive formulas for central involutions and cubelike graphs, with an inverse-closure correction determined by square roots in the uncovered set. The exact thresholds are $2^{d-1}-d$ for hypercubes ($d\ge1$), and $3$ for standard Keller graphs when $d=2$ and $2^{d-1}$ when $d\ge3$. Every admissible augmentation of either family has ordinary pebbling number equal to its order.}\\
{\bf Keywords:}  Optimal pebbling, Cayley graph, generator reinforcement, vertex cover, Keller graph\\
{\bf Mathematics Subject Classification:} 20B25, 20D60, 05C25

\section{\bf Introduction and conventions}\label{sec:intro}

A pebble distribution on a graph $G$ is a function
$D:V(G)\to\Z_{\ge0}$, with size
$|D|=\sum_{v\in V(G)}D(v)$.
A legal pebbling move removes two pebbles from a vertex containing
at least two pebbles and adds one pebble to an adjacent vertex.
A vertex $r$ is \emph{reachable} from $D$ if a finite sequence of
legal moves leaves at least one pebble at $r$.
A distribution is \emph{solvable} if every vertex is reachable,
separately, from the original distribution.
Different targets may use different move sequences; simultaneous
coverage is not required.
The ordinary pebbling number $\pi(G)$ is the least integer $t$ such
that every distribution of size $t$ is solvable.
The optimal pebbling number $\popt(G)$ is the minimum size of a
solvable distribution.

All graphs are finite, simple, and connected unless stated otherwise,
and $\pi(K_1)=\popt(K_1)=1$.
We write $\ind_{\{P\}}$ for the indicator of a condition $P$:
it equals one when $P$ holds and zero otherwise.
For a vertex $v$, write $N_G(v)$ and $N_G[v]$ for its open and
closed neighborhoods, respectively, and suppress the subscript when
the graph is fixed.
An edge $uv$ is \emph{dominating} if
$N[u]\cup N[v]=V(G)$.
For a connected graph $G$ with at least two vertices, the following
elementary small-distribution criteria hold:
\begin{equation}\label{eq:small}
\begin{split}
 \popt(G)=2
 &\quad\Longleftrightarrow\quad
 G\text{ has a universal vertex},\\
 \popt(G)\le3
 &\quad\Longleftrightarrow\quad
 G\text{ has a dominating edge}.
\end{split}
\end{equation}
Accordingly, $\popt(G)=3$ holds precisely when $G$ has a dominating
edge but no universal vertex.
For a connected noncomplete regular graph, the absence of a
universal vertex is automatic. The second equivalence in
\eqref{eq:small} holds without regularity; the noncomplete regular
hypothesis only rules out a universal vertex.
For a graph of diameter two, placing four pebbles at one vertex
gives a solvable distribution. Thus these criteria yield the
characterization in \cite{Muntz2007}: the optimal pebbling number
is four precisely when there is no dominating edge; see also
\cite[Section~2]{Czygrinow2019}.

Write $\kappa(G)$ for the vertex-connectivity of $G$.
The Clarke--Hochberg--Hurlbert theorem states that every
$3$-connected graph of diameter two is Class~$0$, meaning that
$\pi(G)=|V(G)|$~\cite{CHH1997}.
The Watkins--Mader bound gives
$\kappa(G)\ge 2(k+1)/3$ for every connected noncomplete
vertex-transitive graph of degree $k$
\cite{Watkins1970,Mader1970}.
Thus, if $k\ge3$, such a graph is $3$-connected.
The noncomplete connected vertex-transitive graphs of degree two
and diameter at most two are $C_4$ and $C_5$.
Together with $\pi(K_n)=n$ for $n\ge1$ and the classical values
$\pi(C_4)=4$ and $\pi(C_5)=5$, this yields
$$
 G\text{ vertex-transitive},\quad \diam(G)\le2
 \quad\Longrightarrow\quad \pi(G)=|V(G)|.
$$
The ordinary pebbling formulas for cycles are due to \cite{PSV1995}:
$$
 \pi(C_{2m})=2^m \quad(m\ge2),\qquad
 \pi(C_{2m+1})
 =2\left\lfloor\frac{2^{m+1}}3\right\rfloor+1
 \quad(m\ge1),
$$
and the optimal pebbling formula is proved in \cite{Bunde2008}:
$$
 \popt(C_n)=\left\lceil\frac{2n}{3}\right\rceil
 \qquad(n\ge3).
$$

Let $\Gamma$ be a finite group of order $N\ge2$, with identity $e$,
and let
$$
 S=S^{-1}\subseteq\Gamma\setminus\{e\},
 \qquad \langle S\rangle=\Gamma.
$$
We use the right Cayley graph
$$
 G=\Cay(\Gamma,S),\qquad
 g\sim h\quad\Longleftrightarrow\quad g^{-1}h\in S.
$$
Put $k=|S|$ and $T=\{e\}\cup S$.
Thus $N[g]=gT$, and left multiplication acts regularly by
automorphisms.
Moreover, $\diam(G)\le2$ if and only if $T^2=\Gamma$.
In this case, the pebbling numbers are completely determined by
\begin{equation}\label{eq:diameter-cayley}
 \pi(G)=N, \ \
 \popt(G)=
 \begin{cases}
  2,
   & T=\Gamma,\\
  3,
   & T\ne\Gamma
     \text{ and }T\cup sT=\Gamma
     \text{ for some }s\in S,\\
  4,
   & \text{otherwise}.
 \end{cases}
\end{equation}

Indeed, the closed-neighborhood identity needed for this
specialization is
$$
 N[g]\cup N[gs]=g(T\cup sT).
$$
For $s\in S$, put $c(s)=|S\cap sS|$.
Since $T\cap sT$ is the disjoint union of $\{e,s\}$ and
$S\cap sS$, we have
$$
 |T\cup sT|=2k-c(s).
$$
Consequently, in the noncomplete case, the minimum number of
vertices outside the closed-neighborhood union of an edge is
$$
 \min_{s\in S}\bigl(N-|T\cup sT|\bigr)
 =N-2k+\min_{s\in S}c(s).
$$
This quantity is zero precisely when $\popt(G)=3$.
In particular, $N>2k$ excludes a dominating edge.

For a proper nontrivial subgroup $H<\Gamma$, let
$G_H=\Cay(\Gamma,\Gamma\setminus H)$.
This graph is complete multipartite, with the left cosets of $H$
as its parts.
By~\eqref{eq:diameter-cayley},
$$
 \bigl(\pi(G_H),\popt(G_H)\bigr)=(N,3).
$$
In particular, the Cayley graph generated by all reflections in
the dihedral group of order $2m$ is $K_{m,m}$, and
$$
 \bigl(\pi(K_{m,m}),\popt(K_{m,m})\bigr)=(2m,3)
 \qquad(m\ge2).
$$

For $m,n\ge2$, the Cartesian product $K_m\square K_n$ is a
Cayley graph of diameter two, with
$$
 \pi(K_m\square K_n)=mn,\qquad
 \popt(K_m\square K_n)=
 \begin{cases}
  3,&\min(m,n)=2,\\
  4,&m,n\ge3.
 \end{cases}
$$
The second case is also contained in
\cite[Corollary~3.10]{HHH2011}; the first follows immediately
from~\eqref{eq:small}.
For an edge whose endpoints differ only in the first coordinate,
the union of their closed neighborhoods misses $(m-2)(n-1)$ vertices.
If the endpoints differ only in the second coordinate, it misses
$(n-2)(m-1)$ vertices.

For example, the same group $\Z_8$ supports Cayley graphs with
the following different pebbling numbers.

\begin{table}[htbp]
  \centering
  \begin{tabular}{@{}lccc@{}}
    \hline
    Generating set & Graph & $\pi$ & $\popt$\\
    \hline
    $\{1,7\}$ & $C_8$ & $16$ & $6$\\
    $\{1,3,5,7\}$ & $K_{4,4}$ & $8$ & $3$\\
    $\{1,4,7\}$ & $\Cay(\Z_8,\{1,4,7\})$ & $8$ & $4$\\
    $\Z_8\setminus\{0\}$ & $K_8$ & $8$ & $2$\\
    \hline
  \end{tabular}
\caption{Pebbling and optimal pebbling numbers for Cayley graphs on $\Z_8$}
  \label{tab:pebbling-Z8}
\end{table}

In the third row, $T+T=\Z_8$ and $8>2|S|$.
The other rows follow from the cited cycle formulas and the
preceding specializations.

For a coordinate vector $x$ in $\F_2^d$ or $\Z_4^d$, let
$$
 \supp(x)=\{i:x_i\ne0\},\qquad
 \wt(x)=|\supp(x)|,
$$
and write $e_i$ for the $i$th coordinate vector.
For $d\ge1$, let
$$
 Q_d=\Cay(\F_2^d,\{e_1,\ldots,e_d\}).
$$
Chung~\cite{Chung1989} proved that $\pi(Q_d)=2^d$.
The hypercube lower bound of Moews, as presented in
\cite{Bunde2008}, together with the optimal
Cartesian-product inequality
\cite[Corollary~2.7]{HHH2011}, gives
$$
 \left\lceil(4/3)^d\right\rceil
 \le \popt(Q_d)
 \le 3^{\lfloor d/2\rfloor}2^{d\bmod2}.
$$
Pachter's 2026 preprint \cite{Pachter2026} proves that
$\popt(Q_d)=\Theta((4/3)^d)$ as $d\to\infty$.
This is an asymptotic statement, not an exact formula for
each dimension.

For $d\ge2$, the standard Keller graph $G_d$ on $\Z_4^d$ is
defined by
\begin{equation}\label{eq:keller-S}
 S_d=
 \{x\in\Z_4^d:
   |\supp(x)|\ge2,\ x_i=2\text{ for some }i\},
 \qquad
 G_d=\Cay(\Z_4^d,S_d).
\end{equation}
It is vertex-transitive, with order $4^d$ and degree
$4^d-3^d-d$; see \cite[Section~6]{Jarnicki2017}.
The coordinate checks in Lemma~\ref{lem:keller-geometry} show that
$G_d$ has diameter two and no dominating edge.
Consequently, \eqref{eq:diameter-cayley} yields
\begin{equation}\label{eq:keller-static}
 \pi(G_d)=4^d,\qquad
 \popt(G_d)=4
 \qquad(d\ge2).
\end{equation}

We study generator reinforcement under the specified regular action:
the aim is to add as few generators as possible in an inverse-closed
set so that the augmented graph has optimal pebbling number at most
three. Definition~\ref{def:rho} formalizes the resulting parameter.
We obtain an inverse-orbit covering formula
(Theorem~\ref{thm:cover}), an algorithm for abelian groups
(Theorem~\ref{thm:abelian}), formulas for central involutions
and cubelike graphs
(Theorem~\ref{thm:involution} and Corollary~\ref{cor:cubelike}),
and exact reinforcement thresholds for hypercubes and standard
Keller graphs (Corollary~\ref{cor:cube} and
Theorem~\ref{thm:keller-reinforcement}).
These thresholds are $2^{d-1}-d$ for $Q_d$ ($d\ge1$), and
$3$ for $G_2$ and $2^{d-1}$ for $G_d$ ($d\ge3$).
Every admissible augmentation of either family remains Class~$0$.
Below the threshold, the optimal pebbling number is at least four
for hypercubes and exactly four for Keller graphs.

\section{\bf Inverse-orbit covering for generator reinforcement}
\label{sec:reinforcement}

\begin{definition}\label{def:rho}
Let $(\Gamma,S)$ satisfy the conventions of
Section~\ref{sec:intro}, and put $T=\{e\}\cup S$.
An \emph{admissible augmentation} is a set
$R\subseteq\Gamma\setminus T$ satisfying $R=R^{-1}$.
Define
$$
 \rhothree(\Gamma,S)
 =
 \min\bigl\{
 |R|:
 R\subseteq\Gamma\setminus T,\ R=R^{-1},\
 \popt\bigl(\Cay(\Gamma,S\cup R)\bigr)\le3
 \bigr\}.
$$
For each $s\in\Gamma\setminus\{e\}$, let
$\rho_{3,s}(\Gamma,S)$ be the minimum size of an admissible
augmentation $R$ for which $\{e,s\}$ is a dominating edge of
$\Cay(\Gamma,S\cup R)$.
\end{definition}

All these minima are finite, since $R=\Gamma\setminus T$
produces the complete graph.
The cost counts individual generators: an inverse pair of
distinct elements costs two, whereas an involution costs one.
An admissible augmentation preserves the specified regular
action, but need not preserve every automorphism of the
original graph.
Thus $\rhothree(\Gamma,S)$ is a parameter of the specified
pair $(\Gamma,S)$. No claim is made that this parameter is independent of the chosen regular action.
Every admissible augmentation adds exactly $N|R|/2$
undirected edges, by the degree-sum formula.

By~\eqref{eq:small}, an augmented graph has optimal pebbling
number at most three if and only if it has a dominating edge.
Left translation takes any dominating edge $\{u,v\}$ to
$\{e,u^{-1}v\}$.
Consequently,
\begin{equation}\label{eq:min-witness}
 \rhothree(\Gamma,S)
 =
 \min_{s\in\Gamma\setminus\{e\}}
 \rho_{3,s}(\Gamma,S).
\end{equation}

Fix $s\ne e$ and put
$$
 F_s=\{s,s^{-1}\}\setminus S,\qquad
 T_s=T\cup\{s,s^{-1}\},\qquad
 M_s^*=\Gamma\setminus(T_s\cup sT_s).
$$
Both $F_s$ and $T_s$ are inverse-closed.
The \emph{inverse orbits} in $\Gamma\setminus T_s$ are the sets
$[x]=\{x,x^{-1}\}$, with weights
$$
 w([x])=|[x]|\in\{1,2\}.
$$
Construct a possibly disconnected auxiliary graph $\mathcal H_s$
whose vertices are these inverse orbits, including, for each
$g\in M_s^*$, the edge
\begin{equation}\label{eq:orbit-edge}
 \bigl\{[g],[s^{-1}g]\bigr\}.
\end{equation}
Both endpoints are well defined: $g\notin T_s$ and
$s^{-1}g\notin T_s$ by the definition of $M_s^*$.
Loops are permitted and must be covered by their incident vertex.
Duplicate edges represent the same constraint and are retained only once.
Write $\tau_w(\mathcal H_s)$ for the minimum total weight of
a vertex cover of $\mathcal H_s$, with this convention about
loops.

\begin{theorem}
\label{thm:cover}
For every pair $(\Gamma,S)$ satisfying the conventions of
Section~\ref{sec:intro} and every $s\ne e$,
$$
 \rho_{3,s}(\Gamma,S)
 =
 |F_s|+\tau_w(\mathcal H_s).
$$
Consequently,
$$
 \rhothree(\Gamma,S)
 =
 \min_{s\in\Gamma\setminus\{e\}}
 \bigl(|F_s|+\tau_w(\mathcal H_s)\bigr).
$$
For each fixed $s$, an optimal weighted vertex cover of
$\mathcal H_s$, together with the forced generators $F_s$,
yields an augmentation attaining $\rho_{3,s}(\Gamma,S)$.
Choosing $s$ to minimize the displayed expression yields an
augmentation attaining $\rhothree(\Gamma,S)$.
\end{theorem}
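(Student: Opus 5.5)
We reduce everything to a single membership condition and then translate it, orbit by orbit, into covering constraints. For an admissible augmentation $R$, set $S'=S\cup R$ and $T'=\{e\}\cup S'$. By the closed-neighborhood identity $N[g]\cup N[gs]=g(T'\cup sT')$, the pair $\{e,s\}$ is a dominating edge of $\Cay(\Gamma,S')$ exactly when two things hold. First, $s\in S'$. Second, $T'\cup sT'=\Gamma$; equivalently, every $g\in\Gamma$ satisfies $g\in T'$ or $s^{-1}g\in T'$.

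The first condition, together with $R=R^{-1}$, forces $F_s\subseteq R$. Hence every admissible $R$ splits as the disjoint union $R=F_s\sqcup R'$, where $R'\subseteq\Gamma\setminus T_s$ is inverse-closed. Such an $R'$ is precisely a union of inverse orbits, and $|R'|$ equals the total weight $w$ of those orbits. This already gives the term $|F_s|$. It remains to show that the second condition on $R'$ says exactly that the chosen orbits form a vertex cover of $\mathcal H_s$.

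Once $F_s$ is added, we have $T_s\subseteq T'$ and hence $T_s\cup sT_s\subseteq T'\cup sT'$. Any $g\notin M_s^*$ is therefore automatically covered, whatever $R'$ is. Now take $g\in M_s^*$. Then $g\notin T_s$ and $s^{-1}g\notin T_s$, so $g\in T'$ iff $g\in R'$, which holds iff $[g]\subseteq R'$; likewise for $s^{-1}g$. Thus $g\in T'\cup sT'$ iff at least one endpoint of the edge $\{[g],[s^{-1}g]\}$ of $\mathcal H_s$ is selected. When $[g]=[s^{-1}g]$ this is a loop, and it is forced in exactly as the loop convention prescribes.

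Conversely, every vertex cover $\mathcal C$ of $\mathcal H_s$ yields the admissible augmentation $R=F_s\cup\bigcup\mathcal C$. It lies in $\Gamma\setminus T$ and is inverse-closed, and by the same equivalence $\{e,s\}$ is dominating. This bijection between admissible $R$ for $s$ and vertex covers preserves cost up to the additive constant $|F_s|$. Hence $\rho_{3,s}=|F_s|+\tau_w(\mathcal H_s)$, and optimal covers yield optimal augmentations. The formula for $\rhothree$ and its attaining augmentation then follow from \eqref{eq:min-witness}, which rests on \eqref{eq:small} and left translation.

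The main obstacle is bookkeeping rather than depth, and it has two parts. One must check that elements of $T_s\setminus T$ (that is, $F_s$) are correctly counted once in $|F_s|$ and never as orbit vertices. One must also check that vertices of $\mathcal H_s$ lying on no edge never need to be chosen, so that the minimization over $R'$ really coincides with $\tau_w$.
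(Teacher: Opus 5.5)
Your proposal is correct and follows essentially the same route as the paper. Both force $F_s$ from the edge requirement and split $R=F_s\sqcup R'$ with $R'\subseteq\Gamma\setminus T_s$ inverse-closed. Both observe that only $M_s^*$ remains uncovered, and that each $g\in M_s^*$ imposes exactly the cover constraint on $\{[g],[s^{-1}g]\}$, loops included. The minimum over witnesses then follows from \eqref{eq:min-witness}. Your explicit bijection between inverse-closed $R'$ and subsets of $V(\mathcal H_s)$ is just a tidier packaging of the paper's two directions.
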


\begin{proof}
For $\{e,s\}$ to be an edge of the augmented graph, its
generating set must contain $s$ and, by inverse closure,
$s^{-1}$.
Thus every admissible augmentation for this fixed witness
must contain $F_s$.

Such an augmentation has a unique decomposition
$$
 R=F_s\mathbin{\dot\cup}R',
 \qquad
 R'\subseteq\Gamma\setminus T_s,
 \qquad
 R'=(R')^{-1}.
$$
The closed-neighborhood union of $\{e,s\}$ in the augmented
graph is
$$
 (T_s\cup R')\cup s(T_s\cup R')
 =
 T_s\cup sT_s\cup R'\cup sR'.
$$
Therefore only the vertices of $M_s^*=\Gamma\setminus(T_s\cup sT_s)$ remain to be covered
after the forced additions.
For $g\in M_s^*$, membership in the augmented
closed-neighborhood union is equivalent to
$$
 g\in R'
 \quad\text{or}\quad
 s^{-1}g\in R'.
$$
Since $R'$ is a union of inverse orbits, this is exactly the
vertex-cover constraint associated with~\eqref{eq:orbit-edge},
including the case of a loop.

Conversely, if $\mathcal C$ is a vertex cover of
$\mathcal H_s$, then
$$
 R_{\mathcal C}
 =
 F_s\cup\bigcup_{O\in\mathcal C}O
$$
is an admissible augmentation for which $\{e,s\}$ is a
dominating edge.
The inverse orbits are pairwise disjoint and disjoint from
$F_s$, hence
$$
 |R_{\mathcal C}|
 =
 |F_s|+\sum_{O\in\mathcal C}w(O).
$$
The correspondence therefore preserves costs and proves
the fixed-witness formula.
Taking the minimum over $s\ne e$ and
using~\eqref{eq:min-witness} gives the global formula and
the assertions about optimal augmentations.
\end{proof}

The covering formulation determines which generators to add;
it does not prescribe a sequence of pebbling moves.

\begin{theorem}\label{thm:abelian}
Let $s\ne e$ and let $\mathcal H_s$ be the auxiliary graph defined
by~\eqref{eq:orbit-edge}, with duplicate constraints represented
by a single edge. If $s$ is central, then deleting all vertices
carrying loops, together with their incident edges, leaves a
simple graph of maximum degree at most two.
An optimal weighted vertex cover of $\mathcal H_s$,
and hence $\rho_{3,s}(\Gamma,S)$ and an augmentation attaining
it, can be computed in $O(N)$ operations.

For an abelian group, all values $\rho_{3,s}(\Gamma,S)$,
their minimum $\rhothree(\Gamma,S)$, and an augmentation
attaining this minimum can be computed in $O(N^2)$ operations
using $O(N)$ auxiliary storage beyond the input.

These operation counts assume a word-RAM model with
$O(\log N)$-bit words, explicitly indexed group elements,
constant-time multiplication and inversion lookup, and
Boolean membership arrays for subsets.
If only a multiplication table is supplied, an inverse table
can first be constructed in $O(N^2)$ operations using $O(N)$
additional storage. This preprocessing does not change the
total abelian bound.
\end{theorem}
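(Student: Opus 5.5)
The plan is to prove the structural claim first by listing, for a fixed inverse orbit $[x]$, every possible neighbour of $[x]$ in $\mathcal H_s$. The algorithmic claims then follow from the path/cycle structure.

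\textbf{Neighbours of $[x]$.} By~\eqref{eq:orbit-edge}, an edge meets $[x]$ in one of two ways.
\begin{itemize}
\item Its generating element satisfies $g\in[x]$. Then the other endpoint is $[s^{-1}x]$ or $[s^{-1}x^{-1}]$.
\item Its generating element satisfies $s^{-1}g\in[x]$, that is, $g\in\{sx,sx^{-1}\}$. Then the other endpoint is $[sx]$ or $[sx^{-1}]$.
\end{itemize}
So the candidate neighbours are the four orbits $[s^{-1}x]$, $[s^{-1}x^{-1}]$, $[sx]$ and $[sx^{-1}]$.

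\textbf{Degree bound when $s$ is central.} If $s$ is central, then $(s^{-1}x^{-1})^{-1}=xs=sx$, so $[s^{-1}x^{-1}]=[sx]$. Likewise $(sx^{-1})^{-1}=xs^{-1}=s^{-1}x$, so $[sx^{-1}]=[s^{-1}x]$. Hence every non-loop edge at $[x]$ goes to $[sx]$ or to $[s^{-1}x]$. Duplicate constraints are identified by hypothesis. After deleting the vertices that carry loops, together with their incident edges, the remaining graph is therefore simple with maximum degree at most two. Consequently it is a disjoint union of paths and cycles; a cycle may have length two in the degenerate case $[sx]=[s^{-1}x]$, and I would treat that as an edge.

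\textbf{Computing an optimal cover for fixed central $s$.} A loop at $[g]$ arises exactly when $[s^{-1}g]=[g]$. Such a loop must be covered by its own vertex, so every loop vertex is forced into the cover. Its incident edges are then already covered, and deleting them is harmless. On each remaining path, a minimum-weight vertex cover is found by the standard two-state dynamic programme (current vertex in or out of the cover). On each cycle, I would fix whether one chosen vertex is in the cover and run the path programme twice. This costs time linear in the number of orbits plus edges, which is $O(N)$.

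Building $\mathcal H_s$ also takes $O(N)$:
\begin{itemize}
\item mark $T_s$ in a Boolean array, and mark $sT_s$ in a second array;
\item list $M_s^*$ and $F_s$;
\item index each orbit by its representative $\min(x,x^{-1})$ in the element indexing;
\item for each $g\in M_s^*$, record the adjacency $[g]$–$[s^{-1}g]$, storing at most two neighbour slots per orbit so that duplicates are discarded in constant time.
\end{itemize}
Traversing the components and expanding the chosen orbits, together with $F_s$, gives both $\rho_{3,s}$ and an optimal augmentation; this is correct by Theorem~\ref{thm:cover}.

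\textbf{The abelian case.} Every $s$ is central, so I would loop over the $N-1$ choices of $s$. The same $O(N)$ arrays are reused and cleared after each iteration, and only the best value and its witness $s$ are retained. The optimal augmentation is reconstructed once at the end. This gives $O(N^2)$ time and $O(N)$ extra storage, and the minimum $\rhothree(\Gamma,S)$ then follows from~\eqref{eq:min-witness}. If only a multiplication table is supplied, the inverse table is built by scanning the row of each $g$ for the entry $e$, in $O(N^2)$ time and $O(N)$ storage; this is within the same total bound.

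The main obstacle is the neighbour identification under centrality, together with the careful treatment of loops and degenerate coincident edges. Once that is settled, the $O(N)$ bookkeeping (clearing arrays without $O(N)$-per-entry costs, and deduplicating edges) is routine.
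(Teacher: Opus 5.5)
Your proposal is correct and follows essentially the same route as the paper. Both arguments use the same steps: the candidate neighbours of $[x]$ collapse under centrality to $\{[sx],[s^{-1}x]\}$; loop vertices are forced into the cover and deleted; a two-state dynamic programme runs on paths, and twice on cycles; each witness costs $O(N)$, giving $O(N^2)$ over all witnesses in the abelian case. The only differences are minor bookkeeping: you spell out how to build the inverse table by scanning each row for $e$, and you recompute the best witness's cover at the end rather than storing it.
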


\begin{proof}
Fix a central element $s\ne e$ and let $\mathcal H_s$ be the auxiliary graph defined by~\eqref{eq:orbit-edge}, with duplicate constraints identified. Let $\mathcal L_s$ be the set of vertices carrying a loop. Every vertex cover of $\mathcal H_s$ contains $\mathcal L_s$, because the loop at $O$ can only be covered by selecting $O$. Thus we first select all vertices in $\mathcal L_s$, pay their weights, and remove them together with their incident edges. Let $K_s=\mathcal H_s-\mathcal L_s$.

We claim that \(K_s\) has maximum degree at most two. Fix an inverse orbit $[x]=\{x,x^{-1}\}$. A constraint has the form
\[
\{[g],[s^{-1}g]\},\qquad g\in M_s^*.
\]
If it is incident with \([x]\) through its first endpoint, then \(g=x\) or \(g=x^{-1}\), and its other endpoint is either
\[
[s^{-1}x]\quad\text{or}\quad [s^{-1}x^{-1}]=[sx].
\]
If it is incident with \([x]\) through its second endpoint, then \(s^{-1}g=x\) or \(s^{-1}g=x^{-1}\), so the other endpoint is either
\[
[sx]\quad\text{or}\quad [sx^{-1}]=[s^{-1}x].
\]
Hence all nonloop neighbors of \([x]\) lie in
\[
\{[sx],[s^{-1}x]\}.
\]
Therefore \(K_s\) is a simple graph of maximum degree at most two. Its connected components are isolated vertices, paths, or cycles.

For each path component \(v_1\ldots v_t\) with weights \(w_1,\ldots,w_t\), let \(A_i\) and \(B_i\) denote the minimum cover costs for the first \(i\) vertices when \(v_i\) is selected and unselected, respectively. Then
\[
A_1=w_1,\qquad B_1=0,
\]
and for \(2\le i\le t\),
\[
A_i=w_i+\min(A_{i-1},B_{i-1}),\qquad
B_i=A_{i-1}.
\]
The minimum cost is \(\min(A_t,B_t)\). For a cycle \(v_1\ldots v_t\), we run the same recurrence twice: first with \(A_1=w_1\), \(B_1=+\infty\), and then with \(A_1=+\infty\), \(B_1=0\). In the first run the closing edge is covered by the selected vertex \(v_1\), so the cost is \(\min(A_t,B_t)\). In the second run \(v_1\) is unselected, so the closing edge forces \(v_t\) to be selected and the cost is \(A_t\). Taking the smaller of these two values gives the optimal cycle cost. Storing predecessor choices during the recurrences allows the corresponding cover to be recovered by backtracking. Isolated vertices are not selected.

The preceding computations take time linear in
$|V(\mathcal H_s)|+|E(\mathcal H_s)|=O(N)$.
To construct the auxiliary graphs, build the inverse-orbit partition
of $\Gamma$ once and store each element's orbit identifier and each
orbit's weight. For a fixed $s$, form $T_s$ and $M_s^*$ in $O(N)$
time using Boolean membership arrays, retain the orbits outside
$T_s$, and scan the elements $g\in M_s^*$ to insert the constraints
in~\eqref{eq:orbit-edge}.
Use a separate flag for a loop at each vertex.
The neighbor calculation above shows that every vertex of
$\mathcal H_s$ has at most two distinct nonloop neighbors, so
duplicate constraints can be removed in constant time each.
After the loop vertices are selected and removed, component
traversal, dynamic programming, and backtracking take $O(N)$
time and use $O(N)$ space.

Given an optimal cover \(C_s\) of \(\mathcal H_s\), reconstruct the augmentation by
\[
R_s=F_s\cup\bigcup_{O\in C_s}O.
\]
This set is admissible and attains \(\rho_{3,s}(\Gamma,S)\) by Theorem~\ref{thm:cover}. Its size is
\[
|F_s|+\sum_{O\in C_s}w(O).
\]
When \(\Gamma\) is abelian, every \(s\neq e\) is central, so repeating the fixed-witness computation for all \(N-1\) witnesses takes \(O(N^2)\) time. Reusing the working arrays and retaining one best cover keeps the additional storage at \(O(N)\). If no inverse table is supplied, it can be constructed from the multiplication table in \(O(N^2)\) time and \(O(N)\) additional space, which does not change the overall bound.
\end{proof}

\begin{remark}
Theorem~\ref{thm:abelian} solves the generator-reinforcement
problem with target optimal pebbling number at most three.
It does not provide an algorithm for computing $\pi(G)$ or
$\popt(G)$ for arbitrary abelian Cayley graphs.

For a general nonabelian group,
Theorem~\ref{thm:cover} remains exact.
The linear-time fixed-witness algorithm also applies whenever
the chosen witness is central, but the degree-two argument
does not extend to arbitrary noncentral witnesses.
\end{remark}

\section{\bf Central involutions and cubelike graphs}
\label{sec:involutions}

Throughout this section, $T=\{e\}\cup S$ denotes the
\emph{original} closed generating neighborhood.
The following formula identifies the additional cost imposed
by inverse closure when the prescribed witness is a central
involution.

\begin{theorem}\label{thm:involution}
Let $s\ne e$ be a central involution of $\Gamma$, and put
$$
 M_s=\Gamma\setminus(T\cup sT),\qquad
 r_s=\bigl|\{x\in M_s:x^2=s\}\bigr|.
$$
Then
\begin{equation}\label{eq:involution}
 \rho_{3,s}(\Gamma,S)
 =
 \ind_{\{s\notin S\}}
 +\frac{|M_s|+r_s}{2}.
\end{equation}
\end{theorem}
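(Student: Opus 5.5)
We apply Theorem~\ref{thm:cover} with the witness $s$ and analyse the auxiliary graph $\mathcal H_s$ explicitly. Since $s=s^{-1}$, we have $F_s=\{s\}\setminus S$, so $|F_s|=\ind_{\{s\notin S\}}$. It therefore suffices to show
$$
 \tau_w(\mathcal H_s)=\frac{|M_s|+r_s}{2}.
$$
First we compare $M_s^*$ with $M_s$. Here $T_s=T\cup\{s\}$ and $sT_s=sT\cup\{e\}$. Because $s\in sT$ and $e\in T$, we get $T_s\cup sT_s=T\cup sT$, and hence $M_s^*=M_s$.

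Next we record the structure of $M_s$. The set $T\cup sT$ is inverse-closed, since $(sT)^{-1}=T s=sT$ by centrality and $T=T^{-1}$. It is also invariant under left multiplication by $s$, because $s^2=e$. Therefore $M_s$ is closed under $x\mapsto x^{-1}$ and under $x\mapsto sx$. These two maps are commuting involutions, as $s$ is central. So $M_s$ is a union of orbits of the group $\langle x\mapsto x^{-1},\,x\mapsto sx\rangle$. The constraint for $g\in M_s$ is the edge $\{[g],[sg]\}$, since $s^{-1}=s$. The orbits of $M_s$ are disjoint from $T_s$, so the vertices of $\mathcal H_s$ involved in constraints are exactly the inverse orbits inside $M_s$. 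Every other vertex is isolated and is never selected.

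The key step is to classify the components of $\mathcal H_s$ restricted to $M_s$. Fix $g\in M_s$ and let $O=\{g,g^{-1},sg,sg^{-1}\}$. By the neighbor computation in the proof of Theorem~\ref{thm:abelian}, all edges at $[g]$ go to $[sg]$. So each such $O$ splits into at most two inverse orbits, $[g]$ and $[sg]$, joined by a single edge or a loop. We then compute the minimum cover cost of each $O$ and compare it with $(|O|+|\{x\in O:x^2=s\}|)/2$. There are three cases.
\begin{enumerate}
\item If $[g]=[sg]$, the component is a loop and its cost is $w([g])$. Now $[g]=[sg]$ means $sg=g$, which is impossible, or $sg=g^{-1}$, i.e.\ $g^2=s$. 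When $g^2=s$, also $(g^{-1})^2=s$. If $g\ne g^{-1}$, then $O=[g]$ has size $2$, the cost is $2$, and the formula gives $(2+2)/2=2$. The remaining possibility $g=g^{-1}$ would force $s=g^2=e$, a contradiction.
\item If $[g]\ne[sg]$, the component is a single edge and its cost is $\min(w([g]),w([sg]))$. Note that $w([sg])=w([g])$: indeed $g^2=e$ iff $(sg)^2=s^2g^2=e$. No element of $O$ squares to $s$ here, since $x^2=s$ with $x=g^{\pm1}$ or $x=sg^{\pm1}$ would give $sg^{\pm1}=g^{\mp1}$, i.e.\ $[sg]=[g]$. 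Hence the cost is $w([g])=|O|/2$, matching the formula with zero square roots.
\end{enumerate}
Summing over the disjoint orbits $O$ then gives $\tau_w=(|M_s|+r_s)/2$.

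We expect the main obstacle to be the bookkeeping in this classification: showing that each loop orbit has weight exactly $2$, so that it contributes $(2+2)/2$, and that the square roots of $s$ lying in $M_s$ occur only in loop orbits. Both facts reduce to the identities $sg=g^{-1}\iff g^2=s$ and $s\ne e$, which is where centrality and $s^2=e$ are used.
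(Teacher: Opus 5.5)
Your proof is correct and follows essentially the same route as the paper. Both arguments reduce via Theorem~\ref{thm:cover} to $M_s^*=M_s$, partition $M_s$ into the joint orbits of $x\mapsto sx$ and $x\mapsto x^{-1}$, and show that each orbit $O$ costs $(|O|+|\{x\in O:x^2=s\}|)/2$, with the square roots of $s$ occurring exactly in the two-element orbits where $sx=x^{-1}$. The only difference is presentation: you read the orbits off as components of $\mathcal H_s$ (a single loop, or a single edge between equal-weight vertices), which merges the paper's two non-exceptional cases into one, but the argument is the same.
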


\begin{proof}
By Theorem~\ref{thm:cover},
$$
 \rho_{3,s}(\Gamma,S)=|F_s|+\tau_w(\mathcal H_s).
$$
Since \(s\) is an involution, \(F_s=\varnothing\) when \(s\in S\), and \(F_s=\{s\}\) when \(s\notin S\). Hence \(|F_s|=\ind_{\{s\notin S\}}\).

Put $T_s=T\cup\{s\}$. In either case,
$$
 T_s\cup sT_s
 =(T\cup\{s\})\cup s(T\cup\{s\})
 =T\cup sT,
$$
because $s^2=e$ and $e\in T$. Thus $M_s^*=M_s$:
the forced addition does not change the set still to be covered.

The complement \(M_s=\Gamma\setminus(T\cup sT)\) is invariant under multiplication by \(s\), since
$$
 s(T\cup sT)=sT\cup T.
$$
 It is also inverse-closed, because \(T=T^{-1}\), \(s=s^{-1}\), and \(s\) is central:
$$
 (T\cup sT)^{-1}=T\cup sT.
$$
If an additional generator $r$ lies outside $M_s$, then both $r$
and $sr$ lie in $T\cup sT$, so it covers no vertex of $M_s$.
Since $M_s$ is inverse-closed, discarding such generators
preserves inverse closure.
After the forced addition, it therefore suffices to choose an
inverse-closed subset of $M_s$ meeting every pair $\{x,sx\}$.

On \(M_s\), the maps \(L_s(x)=sx\) and \(\iota(x)=x^{-1}\) are commuting involutions, and \(L_s\) has no fixed point. Their joint orbits are therefore of size two or four. We analyze each joint orbit \(J\).

If \(|J|=4\), write
$$
 J=\{x,sx,x^{-1},sx^{-1}\}.
$$
The two inverse orbits are
$$
 \{x,x^{-1}\},\qquad \{sx,sx^{-1}\}.
$$
Selecting either one covers both \(L_s\)-pairs and costs two. Since inversion has no fixed point on \(J\), this is minimal.

If \(|J|=2\) and inversion fixes both points, then \(J=\{x,sx\}\), with both elements self-inverse. Selecting either element covers the unique \(L_s\)-pair and costs one.

If \(|J|=2\) and inversion interchanges the two points, then \(J=\{x,sx\}\) and \(x^{-1}=sx\). Inverse closure forces both elements to be selected, so the cost is two. This case is equivalently characterized by \(x^2=s\). Indeed,
$$
 x^{-1}=sx\iff e=xsx=sx^2\iff x^2=s.
$$
If \(x\) is exceptional, then \(sx=x^{-1}\) is exceptional as well, and these two points are distinct. Hence the exceptional points form \(r_s/2\) pairs, one for each joint orbit of this third type.

Summing over all joint orbits, the first two types contribute \(|J|/2\), while each exceptional orbit contributes \(|J|/2+1\). Since the joint orbits partition \(M_s\), the total cover cost is
$$
\frac{|M_s|}{2}+\frac{r_s}{2}
=\frac{|M_s|+r_s}{2}.
$$
Adding the forced cost \(\ind_{\{s\notin S\}}\) gives
$$
\rho_{3,s}(\Gamma,S)
=\ind_{\{s\notin S\}}+\frac{|M_s|+r_s}{2}.
$$
Taking the union of the minimizing choices in all joint orbits produces an inverse-closed set satisfying all constraints, so the bound is attained.

\end{proof}

\begin{corollary}\label{cor:cubelike}
Let $S\subseteq\F_2^d\setminus\{0\}$ generate $\F_2^d$,
where $d\ge1$.
In additive notation, put $T=\{0\}\cup S$ and
$$
 c(s)=|S\cap(s+S)|
 \qquad(s\in\F_2^d\setminus\{0\}),
$$
whether or not $s$ belongs to $S$.
Then, for every $s\ne0$,
$$
 \rho_{3,s}(\F_2^d,S)
 =
 2^{d-1}-|S|+\frac{c(s)}2.
$$
Consequently,
\begin{equation}\label{eq:cubelike}
 \rhothree(\F_2^d,S)
 =
 2^{d-1}-|S|
 +\frac12\min_{s\ne0}c(s).
\end{equation}
For a fixed $s$, an augmentation attaining
$\rho_{3,s}(\F_2^d,S)$ is obtained by adding $s$ if it is
absent and selecting one element from each pair
$\{x,x+s\}$ in
$$
 M_s=\F_2^d\setminus\bigl(T\cup(s+T)\bigr).
$$
Choosing $s$ to minimize $c(s)$ gives an augmentation attaining
$\rhothree(\F_2^d,S)$.
\end{corollary}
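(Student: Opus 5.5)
The plan is to specialize Theorem~\ref{thm:involution} to $\Gamma=\F_2^d$. Every nonzero $s$ is a central involution, and every element satisfies $2x=0$. Hence the square-root count is $r_s=|\{x\in M_s:2x=s\}|=0$, because $2x=0\ne s$. Formula~\eqref{eq:involution} then reduces to
$$
 \rho_{3,s}(\F_2^d,S)=\ind_{\{s\notin S\}}+\frac{|M_s|}{2}.
$$
What remains is to compute $|M_s|=2^d-|T\cup(s+T)|$ in terms of $|S|$ and $c(s)$, treating the cases $s\in S$ and $s\notin S$ separately.

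First take $s\in S$. The introduction already gives $|T\cup sT|=2k-c(s)$ with $k=|S|$, since $T\cap(s+T)$ is the disjoint union of $\{0,s\}$ and $S\cap(s+S)$. So $|M_s|=2^d-2|S|+c(s)$, and $\rho_{3,s}=2^{d-1}-|S|+c(s)/2$, as claimed.

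Now take $s\notin S$. I will recompute $T\cap(s+T)$ directly. We have $0\in s+T$ only if $s\in T$, which fails. Likewise $s\in T$ fails, and $s+S\ni0$ is impossible since $s\notin S$. The only remaining overlaps come from $S\cap(s+S)$, because $0+\!$ shifts of $0$ give $s\notin T$. Thus $|T\cap(s+T)|=c(s)$, so $|T\cup(s+T)|=2(|S|+1)-c(s)$. This gives $|M_s|=2^d-2|S|-2+c(s)$, and adding the forced $1$ yields the same expression $2^{d-1}-|S|+c(s)/2$.

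In passing, $c(s)$ is even because $x\mapsto x+s$ is a fixed-point-free involution on $S\cap(s+S)$. This confirms that the formula gives an integer. The only step needing care is this case split, in particular checking that the forced addition exactly compensates the missing $\{0,s\}$ overlap.

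Minimizing over $s$ and applying~\eqref{eq:min-witness} gives~\eqref{eq:cubelike}. For the construction, note that in the proof of Theorem~\ref{thm:involution} every joint orbit in $\F_2^d$ has inversion acting trivially, so each is a pair $\{x,x+s\}$ of cost one. Selecting one element from each pair, together with $s$ if it is absent, is therefore inverse-closed and optimal. Choosing a minimizing $s$ then attains $\rhothree$ by Theorem~\ref{thm:cover}.
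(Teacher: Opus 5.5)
Your proposal is correct and follows the paper's proof essentially step for step: you specialize Theorem~\ref{thm:involution} with $r_s=0$, compute $|M_s|$ from $|T\cap(s+T)|$, and build the optimal augmentation from the pairs $\{x,x+s\}$, adding $s$ when it is absent. The only difference is cosmetic: the paper merges your two cases into the single identity $|T\cap(s+T)|=c(s)+2\ind_{\{s\in S\}}$. Your garbled phrase about ``shifts of $0$'' in the case $s\notin S$ should simply say that $0\notin s+T$ and $s\notin T$, so that $T\cap(s+T)=S\cap(s+S)$.
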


\begin{proof}
Fix \(s\neq 0\). Since \(\mathbb F_2^d\) is abelian and \(2s=0\), \(s\) is a central involution. In additive notation, Theorem~\ref{thm:involution} gives
$$
\rho_{3,s}(\mathbb F_2^d,S)
 =\ind_{\{s\notin S\}}+\frac{|M_s|+r_s}{2}.
$$
The equation \(x^2=s\) becomes \(2x=s\), which has no solution because \(2x=0\). Hence \(r_s=0\).

We compute \(|M_s|\). Write
$$
T=\{0\}\cup S,\qquad s+T=\{s\}\cup(s+S).
$$
If \(s\notin S\), then
$$
T\cap(s+T)=S\cap(s+S),
$$
so its size is \(c(s)\). If \(s\in S\), the intersection additionally contains \(s\) and \(0\), and these two elements are not in \(S\cap(s+S)\). Hence
$$
|T\cap(s+T)|=c(s)+2.
$$
Thus
$$
|T\cap(s+T)|=c(s)+2\ind_{\{s\in S\}}.
$$
Since \(|T|=|s+T|=|S|+1\),
$$
|M_s|
=2^d-\bigl(2(|S|+1)-|T\cap(s+T)|\bigr)
=2^d-2(|S|+1)+c(s)+2\ind_{\{s\in S\}}.
$$
Substituting into~\eqref{eq:involution} gives
$$
\begin{aligned}
\rho_{3,s}
&=\ind_{\{s\notin S\}}
+\frac{2^d-2(|S|+1)+c(s)+2\ind_{\{s\in S\}}}{2}\\
&=2^{d-1}-|S|+\frac{c(s)}2.
\end{aligned}
$$

The map \(x\mapsto x+s\) preserves \(S\cap(s+S)\) and has no fixed point, so \(c(s)\) is even. Likewise, \(x\mapsto x+s\) partitions \(M_s\) into pairs \(\{x,x+s\}\). Choosing one element from each pair covers all constraints. Inverse closure is automatic in \(\mathbb F_2^d\). Together with \(s\) when \(s\notin S\), this attains the fixed-witness minimum. Taking the minimum over \(s\neq0\) gives~\eqref{eq:cubelike}.

\end{proof}

\begin{corollary}\label{cor:cube}
For the standard coordinate generating set of $Q_d$,
where $d\ge1$,
$$
 \rhothree(\F_2^d,\{e_1,\ldots,e_d\})
 =
 2^{d-1}-d.
$$
For $d\ge3$, every admissible augmentation with fewer than
$2^{d-1}-d$ additional generators has optimal pebbling
number at least four.
For every $d\ge2$, there is an augmentation with exactly
$2^{d-1}-d$ additional generators and optimal pebbling
number exactly three.
Every admissible augmentation, regardless of its size,
has ordinary pebbling number $2^d$.
\end{corollary}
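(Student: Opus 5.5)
We apply Corollary~\ref{cor:cubelike} with $S=\{e_1,\ldots,e_d\}$, so $|S|=d$. The first task is to compute $c(s)=|S\cap(s+S)|$ for every $s\ne0$. An element $e_i$ lies in $s+S$ exactly when $s=e_i+e_j$ for some $j$. If $\wt(s)=2$, say $s=e_i+e_j$, then $S\cap(s+S)=\{e_i,e_j\}$ and $c(s)=2$. If $\wt(s)\ne2$, then $c(s)=0$; for $\wt(s)=1$ this uses $e_i\ne e_i+e_i=0$. Hence $\min_{s\ne0}c(s)=0$, attained at $s=e_1$ or at any $s$ of weight $1$, $3,\ldots$. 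Formula~\eqref{eq:cubelike} then gives $\rhothree=2^{d-1}-d$ for every $d\ge1$. For $d=1$ this value is $0$, consistent with $Q_1=K_2$.

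For the lower-bound assertion with $d\ge3$, suppose $R$ is admissible with $|R|<2^{d-1}-d$. By the definition of $\rhothree$, $\popt(\Cay(\F_2^d,S\cup R))\ge4$. So the remaining point is that optimal pebbling number at least four is exactly what the statement claims; this needs no further argument.

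For the existence assertion with $d\ge2$, take the augmentation from Corollary~\ref{cor:cubelike} with $s=e_1$. It has exactly $2^{d-1}-d$ generators, and the resulting graph has the dominating edge $\{0,e_1\}$, so $\popt\le3$ by~\eqref{eq:small}. To get equality we must exclude a universal vertex, that is, show $S\cup R\ne\F_2^d\setminus\{0\}$. This holds because $d+2^{d-1}-d=2^{d-1}<2^d-1$ for $d\ge2$. Hence $\popt=3$. The case $d\ge2$ is needed here: for $d=1$ the graph is $K_2$, where $\popt=2$.

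For the Class~$0$ claim, let $R$ be any admissible augmentation. If the augmented graph has diameter at most two, then vertex-transitivity together with the displayed consequence of the Clarke--Hochberg--Hurlbert and Watkins--Mader results gives $\pi=2^d$. The obstacle is that small augmentations can leave diameter larger than two, for example $R=\varnothing$. For these I would use a general argument instead. Adding edges to a spanning subgraph cannot increase $\pi$, since moves in the subgraph remain legal, so $\pi(\Cay(\F_2^d,S\cup R))\le\pi(Q_d)=2^d$ by Chung's theorem. Conversely, every connected graph $G$ satisfies $\pi(G)\ge|V(G)|$: place one pebble on each vertex except a target $r$. That distribution has $|V|-1$ pebbles and admits no legal move, so $r$ is unreachable. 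Combining the two bounds gives $\pi=2^d$ for every augmentation, with no case split on diameter. Checking monotonicity under adding generators is the only step needing care, and it is immediate.
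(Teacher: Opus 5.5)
Your proposal is correct and follows the paper's proof essentially step by step: the same computation of $c(s)$ fed into Corollary~\ref{cor:cubelike}, the definition of $\rhothree$ for the subthreshold bound, the degree count $2^{d-1}<2^d-1$ to exclude a universal vertex, and the sandwich $2^d=\pi(Q_d)\ge\pi(H)\ge 2^d$. The only cosmetic difference is how you justify $\pi(H)\ge|V(H)|$: you use the explicit unsolvable distribution with one pebble on every vertex except the target, whereas the paper deduces it from $\pi(K_n)=n$ and edge monotonicity.
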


\begin{proof}
For \(S=\{e_1,\ldots,e_d\}\), the formula for \(c(s)\) is immediate. If $\wt(s)=2$, write $s=e_i+e_j$ with $i\ne j$. Its two ordered representations as a sum of elements of $S$ are $e_i+e_j$ and $e_j+e_i$, so $c(s)=2$. For nonzero $s$ with $\wt(s)\ne2$, no such representation exists, so $c(s)=0$. Every coordinate vector has weight one, so \(\min_{s\neq0}c(s)=0\), and Corollary~\ref{cor:cubelike} gives
$$
\rho_3(\mathbb F_2^d,\{e_1,\ldots,e_d\})
=2^{d-1}-d.
$$

Choose a coordinate vector as the witness. Corollary~\ref{cor:cubelike} constructs an admissible augmentation adding \(2^{d-1}-d\) generators. Since \(Q_d\) has degree \(d\), the augmented graph has degree
$$
d+(2^{d-1}-d)=2^{d-1}.
$$
For \(d\ge2\), this is strictly smaller than \(2^d-1\), so the graph is noncomplete and has no universal vertex. The construction gives a dominating edge, so~\eqref{eq:small} implies that its optimal pebbling number is exactly three.

An augmentation with fewer than \(2^{d-1}-d\) generators cannot have \(\pi_{\mathrm{opt}}\le3\), by the definition of $\rhothree$. Its optimal pebbling number is therefore at least four.

For the ordinary pebbling number, every \(n\)-vertex graph is a subgraph of \(K_n\), and \(\pi(K_n)=n\). Since adding edges cannot increase the ordinary pebbling number,
$$
\pi(G)\ge n
$$
for every \(n\)-vertex graph \(G\). Hence every admissible augmentation \(H\) satisfies
$$
2^d=\pi(Q_d)\ge\pi(H)\ge 2^d,
$$
and therefore \(\pi(H)=2^d\).

Finally, each new generator contributes \(2^{d-1}\) undirected edges. At the threshold, the number of new generators is \(2^{d-1}-d\), so the number of new edges is
$$
2^{d-1}(2^{d-1}-d).
$$
\end{proof}

In dimensions one and two the threshold is zero, reflecting $\popt(K_2)=2$ and $\popt(C_4)=3$, respectively.
For $d\ge3$, the subthreshold conclusion is only the lower
bound $\popt\ge4$; equality is not asserted.

\begin{example}
\label{ex:separation}
Represent the vectors of $\F_2^4$ by binary integers, with
addition given by bitwise exclusive-or, and take
$$
 S_A=\{1,2,3,4,8,12\},
 \qquad
 S_B=\{1,2,4,7,8,15\}.
$$
Let
$$
 G_A=\Cay(\F_2^4,S_A),
 \qquad
 G_B=\Cay(\F_2^4,S_B).
$$
For $X\in\{A,B\}$, write $c_X(s)=|S_X\cap(s+S_X)|$.
Both graphs have order $16$, degree $6$, diameter $2$,
ordinary pebbling number $16$, and optimal pebbling
number $4$, but
$$
 \rhothree(\F_2^4,S_A)=3,
 \qquad
 \rhothree(\F_2^4,S_B)=2.
$$

Indeed, put
$$
 U=\{0,1,2,3\},
 \qquad
 V=\{0,4,8,12\}.
$$
These are complementary two-dimensional subspaces of
$\F_2^4$, and
$$
 S_A=(U\setminus\{0\})\cup(V\setminus\{0\}).
$$
Thus $G_A\cong K_4\square K_4$.
Every nonzero vector has exactly two ordered representations
as a sum of two elements of $S_A$, so
$$
 c_A(s)=2\qquad(s\ne0).
$$

The set $S_B$ contains the coordinate basis and hence
generates $\F_2^4$.
Its nine nonzero elements not belonging to $S_B$ are
$$
 3,5,6,9,10,11,12,13,14,
$$
and each is a sum of two elements of $S_B$.
Therefore $G_B$ also has diameter two.
Moreover,
$$
 S_B\cap(1+S_B)=\varnothing,
$$
so $\min_{s\ne0}c_B(s)=0$.
Formula~\eqref{eq:cubelike} now gives
$$
 \rhothree(\F_2^4,S_A)=8-6+1=3,
 \qquad
 \rhothree(\F_2^4,S_B)=8-6=2.
$$
The common ordinary and optimal pebbling numbers follow from~\eqref{eq:diameter-cayley} and $16>2\cdot6$.

To construct minimum-cost augmentations for these two graphs, take the witness $s=1$.
The missing pairs for $S_A$ are
$$
 \{6,7\},\qquad \{10,11\},\qquad \{14,15\},
$$
whereas those for $S_B$ are
$$
 \{10,11\},\qquad \{12,13\}.
$$
Thus one may choose
$$
 R_A=\{6,10,14\},
 \qquad
 R_B=\{10,12\}.
$$
In each augmented graph, $\{0,1\}$ is a dominating edge.

This example shows that the order, degree, diameter, and
the two static pebbling numbers, even taken together,
do not determine the generator-reinforcement parameter.
\end{example}

\section{\bf Exact reinforcement of standard Keller graphs}
\label{sec:keller}

Let $S_d$ and $G_d$ be as in~\eqref{eq:keller-S}, and put
$T=\{0\}\cup S_d$.
All coordinate operations are performed modulo four, and
all reinforcement costs refer to the specified $\Z_4^d$ action.
The following coordinate checks establish the hypotheses needed
to apply~\eqref{eq:diameter-cayley}.

\begin{lemma}\label{lem:keller-geometry}
For every $d\ge2$, the graph $G_d$ has diameter two and
no dominating edge.
\end{lemma}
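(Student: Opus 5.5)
The plan is to verify the two properties directly from the coordinate description~\eqref{eq:keller-S}. By the remarks after~\eqref{eq:diameter-cayley}, diameter two means $T+T=\Z_4^d$, with $T=\{0\}\cup S_d$. No dominating edge means that for every $s\in S_d$ there is some $x\in\Z_4^d$ with $x\notin T$ and $x-s\notin T$. Here $x-s\notin T$ is the translate of the condition $x\notin s+T$, and $T=-T$. I will check each part by an explicit construction.

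\emph{Diameter two.} A nonzero vector $x\notin T$ falls into one of two cases: either $\wt(x)=1$, or $\wt(x)\ge2$ and no coordinate of $x$ equals $2$.

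In the first case write $x=a e_i$ with $a\ne0$, and fix $j\ne i$, which exists since $d\ge2$. Choose $b\in\Z_4\setminus\{0,a\}$ and set
$$
 y=b e_i+2e_j,\qquad z=(a-b)e_i+2e_j.
$$
Both vectors have weight two and contain a coordinate $2$, so $y,z\in S_d$. Their sum is $y+z=a e_i+4e_j=x$.

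In the second case all nonzero coordinates of $x$ are odd. Pick distinct $i,j\in\supp(x)$. Let $y$ agree with $x$ outside $\{i,j\}$, with $y_i=2$ and $y_j=x_j-2$. Let $z=(x_i-2)e_i+2e_j$. Then $x_j-2$ and $x_i-2$ are odd, hence nonzero. So $y$ has weight at least two and a coordinate $2$, and likewise $z$. Thus $y,z\in S_d$ and $y+z=x$. This shows $T+T=\Z_4^d$.

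\emph{No dominating edge.} Fix $s\in S_d$ and define $x$ coordinatewise by
$$
 x_j=\begin{cases}0,&s_j\text{ odd},\\ 1,&s_j\in\{0,2\}.\end{cases}
$$
Then $x$ has no coordinate equal to $2$. It is also nonzero, because $s$ has some coordinate equal to $2$, and there $x$ equals $1$. Hence $x\notin T$.

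Next, every coordinate of $x-s$ is odd: it is $-s_j$ when $s_j$ is odd, $1$ when $s_j=0$, and $-1$ when $s_j=2$. So $x-s$ has weight $d$ and no coordinate $2$, and therefore $x-s\notin T$. Thus $x\notin T\cup(s+T)$, and no edge $\{0,s\}$ is dominating. By left translation no edge is dominating at all.

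The only step needing care is the case split in the diameter argument, specifically making sure that each chosen summand really has weight at least two and contains a $2$. The non-domination witness above is uniform in $s$, so I expect no obstacle there.
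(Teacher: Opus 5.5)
Your proof is correct and follows essentially the paper's route. For diameter, you write each $x\notin T$ explicitly as a sum of two elements of $S_d$; your split (weight one versus weight at least two with odd coordinates) is only a rearrangement of the paper's split (no coordinate $2$ versus $x=2e_i$). For non-domination, you exhibit a vertex with no coordinate $2$ outside $T\cup(s+T)$; your witness differs slightly from the paper's $w$ (which has $w_i=1$ exactly where $s_i=2$), but both work.

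One small point: $T+T=\Z_4^d$ only gives diameter at most two. For diameter exactly two you should also note that $G_d$ is not complete, for instance because $2e_i\notin T$ or because of the vertex $x\notin T$ from your second part; the paper states this explicitly.
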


\begin{proof}
If \(x\in T\), then \(x=x+0\in T+T\). Hence assume \(x\notin T\). If $x$ has no coordinate equal to $2$,
choose distinct coordinates $i,j$ and put
$$
 a=2e_i+(x_j-2)e_j,\qquad b=x-a.
$$
The $i$th and $j$th coordinates of $a$ are $2$ and a
nonzero element, whereas those of $b$ are a nonzero
element and $2$. Hence $a,b\in S_d$ and $x=a+b$.
Otherwise, $x=2e_i$ for some $i$. Choosing $j\ne i$ gives
$$
 x=(e_i+2e_j)+(e_i+2e_j),
$$
with both summands in $S_d$.
Thus $T+T=\Z_4^d$. The vertices \(0\) and \(2e_i\) are nonadjacent because their difference is \(2e_i\notin S_d\). Hence \(G_d\) is not complete. Together with \(T+T=\mathbb Z_4^d\), this gives diameter exactly two.

For any $s\in S_d$, define $w_i=1$ when $s_i=2$ and
$w_i=0$ otherwise. Then $w\ne0,s$, and neither $w$ nor
$w-s$ has a coordinate equal to $2$.
Therefore $w\notin T\cup(s+T)$.
For an arbitrary edge $\{u,v\}$, put $s=v-u\in S_d$.
Translation by $-u$ maps this edge to $\{0,s\}$.
The vertex $w$ constructed above lies outside
$T\cup(s+T)$, so $u+w$ lies outside
$(u+T)\cup(v+T)=N[u]\cup N[v]$.
Thus no edge is dominating.
\end{proof}

\begin{theorem}
\label{thm:keller-reinforcement}
For every $d\ge2$,
\begin{equation}\label{eq:keller-threshold}
 \rhothree(\Z_4^d,S_d)=
 \begin{cases}
  3,&d=2,\\
  2^{d-1},&d\ge3.
 \end{cases}
\end{equation}
Every admissible augmentation with fewer additional generators
than this threshold has optimal pebbling number exactly four.
There is an augmentation attaining the threshold whose
optimal pebbling number is exactly three.
Every admissible augmentation has ordinary pebbling number
$4^d$.
\end{theorem}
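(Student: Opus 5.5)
The plan is to compute $\rho_{3,s}(\Z_4^d,S_d)$ for every witness $s\ne0$ via Theorem~\ref{thm:cover} and then minimize, as in \eqref{eq:min-witness}. The side assertions are handled first, exactly as in Corollary~\ref{cor:cube}.

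\emph{Ordinary pebbling.} Every $n$-vertex graph has $\pi\ge n$, and adding edges cannot increase $\pi$. Hence every augmentation $H$ satisfies $4^d=\pi(G_d)\ge\pi(H)\ge4^d$, using \eqref{eq:keller-static}.

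\emph{Subthreshold.} Adding generators preserves $T+T=\Z_4^d$ (Lemma~\ref{lem:keller-geometry}), so the augmented graph has diameter at most two and $\popt\le4$. Below the threshold it has $\popt\ge4$ by the definition of $\rhothree$, so $\popt=4$.

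\emph{At the threshold.} The augmented graph has degree $|S_d|+\rhothree<4^d-1$ (check this inequality), so it has no universal vertex. Then \eqref{eq:small} gives $\popt=3$.

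The main work is the value of $\rhothree$. First record the complement of $T$: a vector lies outside $T$ iff it is $2e_i$ for some $i$, or it is nonzero with all coordinates in $\{0,1,3\}$. Hence $|\Gamma\setminus T|=3^d-1+d$.

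For a witness $s$, a vertex $x$ lies in $M_s^*$ iff both $x$ and $x-s$ (and their analogues with $T_s$) avoid $T$. "Avoiding $T$" means having no $2$, or being a weight-one $2e_i$. I would classify $s$ by the pattern of its coordinates: how many are $0$, how many are $2$, and how many are odd.
\begin{itemize}
\item \emph{Order two ($s\in\{0,2\}^d$).} Here $s$ is a central involution, so Theorem~\ref{thm:involution} applies. It remains to count $|M_s|$ and $r_s=|\{x\in M_s: 2x=s\}|$. The solutions of $2x=s$ are vectors with odd entries exactly where $s_i=2$. For example, $s=(2,\dots,2)$ gives $|M_s|=r_s=2^d$ and cost $2^d$. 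In general these witnesses should cost at least $2^{d-1}$, and more when $s$ has zero coordinates, since then $|M_s|$ grows like $3^{d-2}$.
\item \emph{Order four.} Here the inverse orbits have size two except on $\{0,2\}^d$. I would use the degree-$\le2$ structure from the proof of Theorem~\ref{thm:abelian}: $\mathcal H_s$ minus its loop vertices is a union of paths and cycles. The lower bound $\tau_w(\mathcal H_s)\ge$ threshold then comes from the loops together with $\lceil t/2\rceil$-type bounds on each path or cycle of $t$ weight-two vertices.
\end{itemize}

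For the upper bound I would try a mixed witness such as $s=(2,1,\dots,1)$ or $s=(2,2,1,\dots,1)$. For these, $M_s^*$ reduces to vectors with prescribed odd first coordinates and $\{0,1\}$-type remaining coordinates. The edges $\{[g],[g-s]\}$ then pair $y$ with its complement $\mathbf 1-y$ in $\{0,1\}^{d-1}$. I would write down an explicit cover, one orbit from each complementary pair, and aim for total weight $2^{d-1}$.

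The case $d=2$ (value $3$) is a finite check on $16$ elements via Theorem~\ref{thm:abelian}. It differs because low dimension makes different witnesses optimal.

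I expect the main obstacle to be the lower bound over \emph{all} witnesses, since there are many coordinate patterns. I would first show that a zero coordinate in $s$ forces a large matching in $\mathcal H_s$ among vectors supported on the zero coordinates, which makes such $s$ expensive. That reduces the analysis to $s$ with entries in $\{1,2,3\}$ only, and for those I would perform the path/cycle accounting explicitly, by the number of odd coordinates.
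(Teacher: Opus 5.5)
Your treatment of the side assertions (ordinary pebbling, subthreshold value four, no universal vertex) is fine. The computation of $\rho_3$, however, has a genuine gap, and the upper-bound construction you propose fails.

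Take $s=(2,1,\ldots,1)$ with $d\ge3$. Then
\[
M_s=\bigl(\{1,3\}\times\{0,1\}^{d-1}\bigr)\cup\{2e_1,(0,1,\ldots,1)\}.
\]
The extra pair is present because $2e_1-s=(0,3,\ldots,3)\notin T$. So $|M_s|=2^d+2$. A generator $r$ covers only $r$ and $s+r$, so every augmentation for this witness has $|R|\ge 2^{d-1}+1$.

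Worse, for $y\ne0$ the vector $-(g_1,y)$ has a coordinate $3$ and lies outside $M_s$. Each weight-two orbit therefore covers only the two box vectors $(g_1,y)$ and $(g_1,\mathbf 1-y)$. Choosing one orbit per complementary pair costs $2^d-1$ in total, not $2^{d-1}$: you counted orbits rather than their weights.

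For $s=(2,2,1,\ldots,1)$ you get $|M_s|=2^d$. But $|R|=2^{d-1}$ would force $R\sqcup(s+R)=M_s$, hence $R\subseteq M_s\cap(-M_s)=\{1,3\}^2\times\{0\}^{d-2}$. That set has only $4<2^{d-1}$ elements once $d\ge4$; for $d=3$ this witness coincides with the paper's.

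The underlying obstruction is as follows. On an odd coordinate of $s$ the allowed values $\Z_4\setminus\{2,s_i+2\}=\{0,s_i\}$ are not closed under negation. So an inverse-closed $R$ with perfect efficiency must vanish there. Hence a threshold-attaining witness needs exactly one odd coordinate and all other coordinates equal to $2$. The paper uses $s=(2,\ldots,2,1)$ and $R=\{1,3\}^{d-1}\times\{0\}$. It needs $d\ge3$ to show $2e_i-s\in S_d$ and $s+2e_i\in S_d$, so that $M_s$ is exactly $\{1,3\}^{d-1}\times\{0,1\}=R\sqcup(s+R)$.

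The lower bound is also left as an unexecuted case analysis. Neither the claim that order-two witnesses cost at least $2^{d-1}$ nor the path/cycle accounting for order-four witnesses is carried out. The idea that makes the bound short is missing: for any $s\ne0$ consider the box $B_s=\prod_i(\Z_4\setminus\{2,s_i+2\})$. Its size is $|B_s|=3^{z(s)}2^{d-z(s)}\ge2^d$. For $x\in B_s$, neither $x$ nor $x-s$ has a coordinate $2$, so $B_s\setminus\{0,s\}\subseteq M_s\subseteq R\cup(s+R)$.

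If $s$ has a coordinate $2$, then $0,s\notin B_s$, and so $2|R|\ge2^d$. Otherwise $s$ has order four, and the generators $\pm s\notin S_d$ are forced. Inside $B_s$ they cover only $0$ and $s$, giving $|R|\ge 2^{d-1}+1$.

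This handles all witnesses at once, with no matching or path/cycle analysis. For $d=2$ it already disposes of every witness without a coordinate $2$. The remaining witnesses reduce, by coordinate permutations and sign changes, to $(2,0)$, $(2,2)$ and $(2,1)$, which the paper evaluates directly with costs $5$, $5$ and $3$.
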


\begin{proof}
We first establish a lower bound for every nonzero $s$, whether or not $s\in S_d$.
Fix $s\in\Z_4^d\setminus\{0\}$, and let $R$ be an admissible
augmentation for which $\{0,s\}$ is a dominating edge.
Writing
$$
 M_s=\Z_4^d\setminus\bigl(T\cup(s+T)\bigr),
$$
we necessarily have
$$
 M_s\subseteq R\cup(s+R).
$$

Define the coordinate box
$$
 B_s=
 \prod_{i=1}^d
 \bigl(\Z_4\setminus\{2,s_i+2\}\bigr).
$$
If $z(s)$ is the number of zero coordinates of $s$, then
$$
 |B_s|=3^{z(s)}2^{d-z(s)}\ge2^d.
$$
For every $x\in B_s$, neither $x$ nor $x-s$ has a coordinate
equal to $2$.
Consequently,
$$
 B_s\setminus\{0,s\}\subseteq M_s.
$$

Suppose first that $s$ has a coordinate equal to $2$.
In that coordinate, the box permits only $1$ and $3$,
so neither $0$ nor $s$ belongs to $B_s$.
It follows that
$$
 B_s\subseteq M_s\subseteq R\cup(s+R),
$$
and hence
$$
 2|R|\ge|B_s|\ge2^d.
$$

Suppose instead that $s$ has no coordinate equal to $2$.
Since $s\ne0$, it has an odd coordinate and therefore has
order four.
Neither $s$ nor $-s$ belongs to $S_d$, so the prescribed
edge and inverse closure force
$$
 \{s,-s\}\subseteq R.
$$
Both $0$ and $s$ belong to $B_s$, whereas $-s$ and $2s$
do not.
Indeed, at an odd coordinate,
$-s_i=s_i+2$ and $2s_i=2$.
Thus
$$
 \bigl(\{s,-s\}\cup(s+\{s,-s\})\bigr)\cap B_s
 =
 \{0,s\}.
$$
The remaining vertices of $B_s$ must be covered by the
other added generators and their translates, giving
$$
 |B_s|-2\le2(|R|-2).
$$
For every other generator $r$, its contribution to $B_s$ is contained in $\{r,s+r\}$.
Therefore
$$
 |R|\ge\frac{|B_s|}{2}+1\ge2^{d-1}+1.
$$
Combining the two cases shows that every successful
augmentation has at least $2^{d-1}$ additional generators.

For $d\ge3$, this lower bound is attained.
Choose the existing generator
$$
 s=(\underbrace{2,\ldots,2}_{d-1},1)
$$
and the augmentation
$$
 R=\{1,3\}^{d-1}\times\{0\}.
$$
The set $R$ is inverse-closed, disjoint from $T$, and has
size $2^{d-1}$.
We claim that
\begin{equation}\label{eq:keller-missing}
 \Z_4^d\setminus\bigl(T\cup(s+T)\bigr)
 =
 \{1,3\}^{d-1}\times\{0,1\}
 =
 R\cup(s+R).
\end{equation}

To verify the first equality, recall that a nonzero vector
outside $T$ either has no coordinate equal to $2$ or is
one of the exceptional vectors $2e_i$.
Every $2e_i$ belongs to $s+S_d$.
For $i<d$, the vector $2e_i-s$ has a coordinate equal to
$2$ among the other first $d-1$ coordinates, since
$d-2\ge1$, and also has a nonzero last coordinate.
For $i=d$, it likewise has at least two nonzero coordinates
and a coordinate equal to $2$.
Thus $2e_i-s\in S_d$ in every case.
Similarly,
$$
 s+2e_i\in S_d\qquad(1\le i\le d).
$$
Hence a vertex outside both closed neighborhoods cannot
be exceptional relative to either endpoint.

It follows that, for every $x\in M_s$, neither $x$ nor $x-s$
has a coordinate equal to $2$.
For the first $d-1$ coordinates, this requires
$x_i\in\{1,3\}$; for the last coordinate, it requires
$x_d\in\{0,1\}$.
Conversely, every vector satisfying these restrictions
lies outside both closed neighborhoods, and the resulting
box contains neither endpoint.
This proves the first equality in~\eqref{eq:keller-missing}.
The second follows because translation by $s$ interchanges
$1$ and $3$ in the first $d-1$ coordinates and changes the
last coordinate from $0$ to $1$.
Thus $\{0,s\}$ is a dominating edge after adding $R$.

It remains to determine the threshold in dimension two. Witnesses $s$ with no coordinate equal to $2$ already require at least three additional generators.
For the other differences, coordinate permutations and
independent sign changes preserve $S_2$ and the reinforcement
costs.
There are therefore only three representatives to consider:
$$
 (2,0),\qquad (2,2),\qquad (2,1).
$$

For the two involutory witnesses, the sets $M_s$ for the original graph are
$$
 M_{(2,0)}
 =
 \{1,3\}\times\{0,1,3\},
$$
and
$$
 M_{(2,2)}
 =
 \{1,3\}^2\cup\{(0,2),(2,0)\}.
$$
Both have six elements.
The numbers of solutions of $2x=s$ in these sets are two
and four, respectively.
The witness $(2,0)$ is absent from $S_2$, whereas $(2,2)$
belongs to $S_2$.
Thus~\eqref{eq:involution}, in additive notation, gives
$$
 \rho_{3,(2,0)}(\Z_4^2,S_2)
 =
 1+\frac{6+2}{2}=5,
$$
and
$$
 \rho_{3,(2,2)}(\Z_4^2,S_2)
 =
 \frac{6+4}{2}=5.
$$

For $s=(2,1)$, the generating set is
$$
 S_2=
 \{(2,1),(2,2),(2,3),(1,2),(3,2)\},
$$
and direct calculation gives
$$
 M_s=
 \{(0,1),(1,0),(1,1),(2,0),(3,0),(3,1)\}.
$$
Since $(0,1)\notin T\cup(s+T)$, covering it requires
either $(0,1)$ or
$$
 (0,1)-s=(2,0)
$$
to belong to $R$.
Thus at least one added generator has even first coordinate.
Covering the missing vertex $(1,0)$ requires either
$(1,0)$ or
$$
 (1,0)-s=(3,3)
$$
to belong to $R$.
Both choices have order four and odd first coordinate.
Inverse closure therefore requires two distinct added
generators with odd first coordinate, disjoint from the
preceding even-coordinate choice.
Hence $|R|\ge3$.

This bound is attained by
$$
 R=\{(1,0),(2,0),(3,0)\}.
$$
Indeed, $R$ is admissible and, for $s=(2,1)$,
$$
 M_s=R\cup(s+R).
$$
Thus $\{0,s\}$ becomes a dominating edge.
This completes the proof of~\eqref{eq:keller-threshold}.

The exhibited augmentations are noncomplete. The number of group elements available as new generators is
$$
 |\Z_4^d\setminus T|=3^d+d-1,
$$
which is larger than the threshold for every $d\ge2$.
The augmented graphs are therefore regular and noncomplete,
so they have no universal vertex.
Together with their dominating edges,
\eqref{eq:small} gives optimal pebbling number exactly three.

For augmentations below the threshold, the optimal pebbling number exceeds three by definition of $\rhothree$.
It is at most four because adding edges cannot increase the
optimal pebbling number and $\popt(G_d)=4$.
Thus every subthreshold augmentation has optimal pebbling
number exactly four.

Finally, every augmented graph contains $G_d$ as a spanning
subgraph.
By~\eqref{eq:keller-static}, $\pi(G_d)=4^d$.
If $H$ is an admissible augmentation, monotonicity under edge
addition and the lower bound by the number of vertices give
$$
 4^d\le\pi(H)\le\pi(G_d)=4^d.
$$
Thus $\pi(H)=4^d$ for every admissible augmentation.
\end{proof}

\begin{corollary}\label{cor:antipodal}
For $d\ge3$, prescribing the antipodal difference
$s=(2,\ldots,2)$ gives
$$
 \rho_{3,s}(\Z_4^d,S_d)=2^d,
$$
twice the unrestricted threshold
in~\eqref{eq:keller-threshold}.
\end{corollary}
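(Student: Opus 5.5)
The witness $s=(2,\ldots,2)$ is an involution in $\Z_4^d$, so I would apply Theorem~\ref{thm:involution} in additive notation. Since $s$ has $d\ge2$ nonzero coordinates and a coordinate equal to $2$, we have $s\in S_d$, so the forced term vanishes and
$$
 \rho_{3,s}(\Z_4^d,S_d)=\frac{|M_s|+r_s}{2},
 \qquad M_s=\Z_4^d\setminus\bigl(T\cup(s+T)\bigr).
$$
The work is therefore to compute $|M_s|$ and $r_s$, the number of $x\in M_s$ with $2x=s$.

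First I would determine $M_s$ coordinatewise, following the pattern used for \eqref{eq:keller-missing}. Translation by $s$ adds $2$ in every coordinate, so $x-s=x+s$ has a coordinate equal to $2$ exactly where $x$ has a coordinate equal to $0$. A vertex outside $T$ is either of the form $2e_i$ or has no coordinate equal to $2$. I would check that each $2e_i$ lies in $s+S_d$: the vector $2e_i+s$ has $d-1\ge2$ coordinates equal to $2$ and a zero in coordinate $i$, so it belongs to $S_d$. The same computation handles the exceptional vectors relative to the other endpoint, since $s+2e_i\in S_d$. Hence $x\in M_s$ precisely when neither $x$ nor $x+s$ has a coordinate equal to $2$, which forces every $x_i\in\{1,3\}$. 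Conversely, every $x\in\{1,3\}^d$ avoids both $T$ and $s+T$, because $x\ne0$, $x\ne s$, and both $x$ and $x+s$ have $d\ge2$ nonzero coordinates and no $2$. Thus $M_s=\{1,3\}^d$ and $|M_s|=2^d$.

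Next, every $x\in\{1,3\}^d$ satisfies $2x=(2,\ldots,2)=s$, so $r_s=2^d$. Substituting gives $\rho_{3,s}=(2^d+2^d)/2=2^d$, which is twice the threshold $2^{d-1}$ of Theorem~\ref{thm:keller-reinforcement} for $d\ge3$. As a sanity check on the mechanism, each pair $\{x,x+s\}$ equals $\{x,-x\}$, so inverse closure forces both elements of every pair to be added. This doubling is exactly the exceptional-orbit case in the proof of Theorem~\ref{thm:involution}.

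The only step needing care is the exclusion of the exceptional vectors $2e_i$ from $M_s$. There one must use $d\ge3$, or at least $d-1\ge2$, so that $2e_i+s$ has enough nonzero coordinates. For $d=2$ the vector $2e_i+s$ is $2e_j$, which is not in $S_2$, and indeed $M_{(2,2)}$ in the paper's $d=2$ computation contains the extra points $(0,2)$ and $(2,0)$. Everything else is routine.
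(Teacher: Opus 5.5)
Your proposal is correct and follows essentially the same route as the paper. Both apply Theorem~\ref{thm:involution} with $s\in S_d$, so the forced term vanishes (centrality of $s$ is automatic because $\Z_4^d$ is abelian), use $s+2e_i\in S_d$ for $d\ge3$ to exclude the exceptional vectors $2e_i$ from $M_s$, and conclude $M_s=\{1,3\}^d$ with every element a square root of $s$, so $r_s=|M_s|=2^d$ and $\rho_{3,s}=2^d$.
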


\begin{proof}
Let
$s=(2,\ldots,2).$
Since \(d\ge3\), \(s\in S_d\), and since \(\mathbb Z_4^d\) is abelian and \(2s=0\), \(s\) is a central involution. Thus Theorem~\ref{thm:involution} applies.

A vertex \(x\notin T\) that has a coordinate equal to 2 must have support size one; otherwise \(x\in S_d\subseteq T\). Hence the only possible exceptional vectors outside \(T\) are \(2e_i\). For every \(i\),
$$
2e_i-s=s+2e_i.
$$
This vector has zero in coordinate \(i\) and \(2\) in all other coordinates. Since \(d\ge3\), it belongs to \(S_d\). Therefore \(2e_i\in s+T\). Similarly, \(s+2e_i\in S_d\), so a vector \(x\) with \(x-s=2e_i\) lies in \(T\).

It follows that if \(x\in M_s\), then neither \(x\) nor \(x-s\) has a coordinate equal to 2. For each coordinate \(i\), this gives
$$
x_i\neq2,\qquad x_i-2\neq2,
$$
hence \(x_i\in\{1,3\}\). Conversely, every \(x\in\{1,3\}^d\) lies outside both closed neighborhoods, so
$
M_s=\{1,3\}^d.
$

For every \(x\in M_s\), each coordinate is 1 or 3, so \(2x_i=2\) and hence \(2x=s\). Therefore
$$
r_s=|M_s|=2^d.
$$
Since \(s\in S_d\), Theorem~\ref{thm:involution} gives
$$
\rho_{3,s}
=0+\frac{2^d+2^d}{2}
=2^d.
$$
\end{proof}

For an augmentation attaining the unrestricted Keller threshold, the number of additional undirected edges is
$$
 \frac{4^d}{2}\,\rhothree(\Z_4^d,S_d)
 =
 \begin{cases}
  24,& \text{if}\; d=2,\\
  2^{3d-2},& \text{if}\; d\ge 3.
 \end{cases}
$$
These are edge costs for augmentations preserving the
specified regular action, rather than reinforcement numbers
allowing arbitrary edge additions.
The square-root correction in
Corollary~\ref{cor:antipodal} also shows why selecting half
of a missing set need not produce an admissible
inverse-closed augmentation.

\section{\bf Conclusion}\label{sec:conclusion}

The results of Sections~\ref{sec:reinforcement}--\ref{sec:keller}
express generator reinforcement under a specified regular action
as a weighted vertex-cover problem on inverse orbits.
For central witnesses, the covering graphs reduce to isolated
vertices, paths, and cycles after the forced loop vertices
are removed.
This yields an $O(N^2)$ algorithm for finite abelian groups
under the computational assumptions of
Theorem~\ref{thm:abelian}.
The central-involution formula identifies the inverse-closure
correction as half the number of square roots of the witness
in the uncovered set. The cubelike
formula gives the exact reinforcement threshold for hypercubes.
Explicit lower bounds and constructions also determine the
threshold for standard Keller graphs.
Example~\ref{ex:separation} shows that the order, degree,
diameter, and the two static pebbling numbers together do
not determine the reinforcement parameter.

These results concern the specified pair $(\Gamma,S)$, with
cost measured by the number of additional generators.
Inverse-closed generator additions preserve the given regular
action, but need not preserve the full automorphism group of
the original graph.
The corresponding number of added edges is $N|R|/2$.
Optimizing over these augmentations is a different problem
from allowing arbitrary individual edge additions.
The abelian algorithm computes the constrained reinforcement
parameter $\rhothree(\Gamma,S)$; it does not compute the
ordinary or optimal pebbling number of an arbitrary abelian
Cayley graph.

\section{\bf Declarations}

\vskip 2mm

\noindent{\bf Data availability} Not applicable.
\vskip 2mm
\noindent{\bf Conflict of interest}  The authors declare that there is no conflict of interest.
\vskip 2mm
\noindent{\bf Funding} This research was supported by NSFC (Nos. 12231009, 12071209).
\vskip 2mm

\noindent{\bf Statement of AI use}
The authors utilized ChatGPT for language editing. All mathematical content and computations were independently verified by the authors.

\end{document}